\newtheorem{thm}{Theorem}[section]
\newtheorem{lemma}[thm]{Lemma}
\newtheorem{ques}[thm]{Question}
\theoremstyle{definition}
\newtheorem{defn}[thm]{Definition}
\newtheorem{ex}[thm]{Example}
\theoremstyle{remark}
\newcommand{\Z}{\mathbb{Z}}
\newcommand{\R}{\mathbb{R}}
\newcommand{\FT}{\mathrm{FT}}
\newcommand{\HT}{\mathrm{HT}}
\newcommand{\id}{\mathbf{1}}
\title{A full-twisting formula for the HOMFLY polynomial}
\author{Keita Nakagane}
\address{Department of Mathematics,
Tokyo Institute of Technology}
\email{nakagane.k.aa@m.titech.ac.jp}
\begin{document}

\begin{abstract}
We introduce a certain class of link diagrams, which 
includes all closed braid diagrams.
We show a generalized version of K\'alm\'an's full-twist formula for the HOMFLY polynomial in the class.
\end{abstract}

\maketitle

\section{Introduction}
The HOMFLY polynomial \cite{HOMFLY, Jones, PT} $P(L)(v, z)$ is an oriented link invariant, which is defined by the skein relation
\[
    v^{-1}P(
    \begin{tikzpicture}[baseline=1]
        \draw[->] (0,0)--(0.3,0.3);
        \draw (0.3,0)--(0.2,0.1);
        \draw[->] (0.1,0.2)--(0,0.3);
    \end{tikzpicture}
    )-v P(
    \begin{tikzpicture}[baseline=1]
        \draw[->] (0.3,0)--(0,0.3);
        \draw (0,0)--(0.1,0.1);
        \draw[->] (0.2,0.2)--(0.3,0.3);
    \end{tikzpicture}
    )=zP(
    \begin{tikzpicture}[baseline=1]
        \draw [bend right = 45,->] (0,0) to (0,0.3);
        \draw [bend left = 45,->] (0.3,0) to (0.3,0.3);
    \end{tikzpicture}
    ),
\]
and the normalization $P(\text{trivial knot}) = 1$.

The framed HOMFLY polynomial is a useful alternative to study the HOMFLY polynomial.
With the blackboard-framing convention, it is defined for oriented link diagrams $D$, and it is denoted by $H(D)(v, z)$. 
The two versions are related by the formula $H(D) = v^{-w(D)}P(D)$, where $w(D)$ is the writhe of $D$.

We can see that $H(D)$ is invariant under the Reidemeister moves II and III.
The skein relation and the Reidemeister move I relations for $H(D)$ are as follows:
\begin{align*}
    H(
    \begin{tikzpicture}[baseline=1]
        \draw[->] (0,0)--(0.3,0.3);
        \draw (0.3,0)--(0.2,0.1);
        \draw[->] (0.1,0.2)--(0,0.3);
    \end{tikzpicture}
    )-H(&
    \begin{tikzpicture}[baseline=1]
        \draw[->] (0.3,0)--(0,0.3);
        \draw (0,0)--(0.1,0.1);
        \draw[->] (0.2,0.2)--(0.3,0.3);
    \end{tikzpicture}
    )=zH(
    \begin{tikzpicture}[baseline=1]
        \draw [bend right = 45,->] (0,0) to (0,0.3);
        \draw [bend left = 45,->] (0.3,0) to (0.3,0.3);
    \end{tikzpicture}
    ),\\
    H(
    \begin{tikzpicture}[baseline=1]
        \draw (0.15,0.15) to [out=-135,in=45] (0,0);
        \draw (0.3,0.25) to [out=180,in=45] (0.15,0.15);
        \draw (0.3,0.05) to [out=0,in=0] (0.3,0.25);
        \draw (0.2,0.1) to [out=-45,in=180] (0.3,0.05);
        \draw (0,0.3) to [out=135,in=-45] (0.1,0.2);
    \end{tikzpicture}
    )&=v^{-1}H(
    \begin{tikzpicture}[baseline=1]
        \draw (0,0) to [out=30,in=-90] (0.2,0.15);
        \draw (0.2,0.15) to [out=90,in=-30] (0,0.3);
    \end{tikzpicture}
    \,
    ),\\
    H(
    \begin{tikzpicture}[baseline=1]
        \draw (0,0) to [out=45,in=-135] (0.1,0.1);
        \draw (0.2,0.2) to [out=45,in=180] (0.3,0.25);
        \draw (0.3,0.25) to [out=0,in=0] (0.3,0.05);
        \draw (0.3,0.05) to [out=180,in=-45] (0.15,0.15);
        \draw (0.15,0.15) to [out=135,in=-45] (0,0.3);
    \end{tikzpicture}
    )&=vH(
    \begin{tikzpicture}[baseline=1]
        \draw (0,0) to [out=30,in=-90] (0.2,0.15);
        \draw (0.2,0.15) to [out=90,in=-30] (0,0.3);
    \end{tikzpicture}
    \,
    ).
\end{align*}

In this paper, we focus on certain extreme parts of $H(D)$.
They are specified by the \textit{Morton--Franks--Williams} (\textit{MFW}) \textit{bounds} \cite{Morton, FW}.

\begin{thm}[\cite{Morton, FW}] 
    For an oriented link diagram D, we have
    \[
        -s(D)+1 \leq \mathrm{mindeg}_{v}\,H(D) \leq \mathrm{maxdeg}_{v}\,H(D) \leq s(D)-1,
    \]
    where $s(D)$ is the number of Seifert circles of $D$.
\end{thm}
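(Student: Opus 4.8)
The plan is to reduce to closed braids and then induct. First I would invoke the Yamada--Vogel braiding algorithm, which converts any diagram $D$ into a closed braid diagram $\widehat{\beta}$, $\beta\in B_n$, using only moves that preserve the number of Seifert circles, so that $n=s(D)$; since $H$ is invariant under Reidemeister II and III, these moves do not change the polynomial and $H(D)=H(\widehat\beta)$. Thus it suffices to prove the bound for a closed braid, where $s(\widehat\beta)=n$ is literally the number of strands. The base case $\beta=\id$ is the $n$-component unlink, whose blackboard-framed value is $\delta^{\,n-1}$ with $\delta=(v^{-1}-v)/z$; its $v$-degrees run exactly from $-(n-1)$ to $n-1$, so the bound holds with equality and explains why $s-1$ is the correct number.

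For the inductive step I would order the pair $(n,\ell)$ lexicographically, where $\ell$ is the word length of $\beta$. The three terms of the skein relation $H(L_+)-H(L_-)=zH(L_0)$ share the same number of Seifert circles: $L_+$ and $L_-$ differ only in over/under data and so have identical Seifert smoothings, while $L_0$ has the crossing already oriented-smoothed, so its smoothing agrees with theirs. Writing $\beta=\gamma\sigma_i^{\pm1}$ and resolving the last generator, the smoothed term is $\widehat\gamma$ with $\gamma\in B_n$ of length $\ell-1$, so by induction $H(\widehat\gamma)$ lies in the band $[-(n-1),n-1]$, and multiplying by $z$ does not move $v$-degrees, so the correction stays in band. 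It then suffices to control one of $H(\widehat{\gamma\sigma_i})$, $H(\widehat{\gamma\sigma_i^{-1}})$. When the top strand meets a single crossing I can destabilize (a Reidemeister I move, Markov's second move): $H(\widehat{\gamma\sigma_{n-1}^{\pm1}})=v^{\mp1}H(\widehat{\gamma'})$ with $\gamma'\in B_{n-1}$, and here the factor $v^{\mp1}$ shifts the $(n-1)$-strand band $[-(n-2),n-2]$ by exactly one into $[-(n-1),n-1]$. The decisive feature is that the degree shift produced by destabilization matches precisely the drop in Seifert number.

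The main obstacle is that a general braid word need not have any strand available for immediate destabilization, so crossings cannot simply be peeled off. To reach a destabilizable form I would pass to the Hecke algebra $H_n=\langle T_1,\dots,T_{n-1}\rangle$, in which $H(\widehat\beta)$ is computed by the Ocneanu trace, and run the reduction on a linear basis rather than on the word itself; conjugation (Markov's first move) is an isotopy preserving both $H$ and $n$, so it may be used freely. The delicate part, and where the real content lies, is the degree bookkeeping: proving by induction on $n$ that the trace of every algebra element, in the normalization matching the blackboard framing, has $v$-degree confined to $[-(n-1),n-1]$. Concretely one must verify that each trace-recursion step either keeps $n$ strands while lowering complexity (contributing only in-band $zH(\widehat\gamma)$-type terms) or destabilizes with a compensating $v^{\mp1}$, so that no $v$-exponent is ever pushed past $\pm(s-1)$; assembling these local estimates into the global band, and carrying the symmetric lower bound in parallel with the upper bound, is the step that must be executed with care.
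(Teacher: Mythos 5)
The paper does not prove this theorem---it is quoted from Morton and Franks--Williams as background---so there is no internal argument to compare against; your route (reduce to closed braids, then bound $v$-degrees of $n$-braid closures by the band $[1-n,\,n-1]$) is essentially the classical Hecke-algebra proof and is a sound plan. The reduction step itself is fine: Vogel's moves are Reidemeister II moves, which preserve both $H$ and the number of Seifert circles, so $H(D)=H(\widehat{\beta})$ with $\beta$ an $s(D)$-strand braid (though note this outsources a nontrivial theorem that postdates the result being proved; Morton's own argument works directly on diagrams).

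The genuine gap is that the central estimate is announced rather than proved, and the induction as literally set up does not terminate. The skein triple at the last letter relates $\widehat{\gamma\sigma_i}$, $\widehat{\gamma\sigma_i^{-1}}$ and $\widehat{\gamma}$; induction covers $\widehat{\gamma}$ (length $\ell-1$), but the crossing-switched word still has length $\ell$, and nothing in your setup guarantees that repeatedly switching crossings ever produces a destabilizable word. So ``it suffices to control one of $H(\widehat{\gamma\sigma_i})$, $H(\widehat{\gamma\sigma_i^{-1}})$'' restates the problem, and the subsequent appeal to ``degree bookkeeping in the Ocneanu trace'' is explicitly deferred rather than executed. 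The standard way to close it is a normal-form argument: using only $\sigma_i^{-1}=\sigma_i-z$ (which introduces no powers of $v$), every element of $H_n$ is a $\Z[z,z^{-1}]$-linear combination of elements of the two forms $x$ and $x\,\sigma_{n-1}\sigma_{n-2}\cdots\sigma_j$ with $x\in H_{n-1}$. For the first form, the closure in $B_n$ is the closure in $B_{n-1}$ together with a split unknot, so its polynomial is $\frac{v^{-1}-v}{z}$ times an $(n-1)$-strand closure and lies in $[1-n,\,n-1]$ by induction on $n$; for the second, conjugation gives $\widehat{(\sigma_{n-2}\cdots\sigma_j)\,x\,\sigma_{n-1}}$, which destabilizes to $v^{-1}H\bigl(\widehat{(\sigma_{n-2}\cdots\sigma_j)\,x}\bigr)$ with degrees in $[1-n,\,n-3]$. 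Since multiplying by powers of $z$ does not move $v$-degrees, both bounds follow simultaneously. Without this (or an equivalent) step, the proposal identifies the right skeleton but leaves the load-bearing part unproved.
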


For a diagram $D$, we denote the coefficient of $v^{s(D)-1}$ (resp.\ $v^{-s(D)+1}$) in $H(D)$ by $H_{+}(D)$ (resp.\ $H_{-}(D)$).
Here $H_{+}(D)$ and $H_{-}(D)$ are Laurent polynomials in $z$.
We note that they can be zero when the MFW bounds are not sharp for $D$.

Now we recall the full-twist formula for braids by K\'alm\'an \cite{Kalman}.

\begin{thm}[\cite{Kalman}]\label{braidFT} 
    Let $B$ be an $n$-strand braid.
    Then we have 
    \[
        H_{-}(\widehat{B}) = (-1)^{n-1}H_{+}(\widehat{\FT B}),        
    \]
    where $\FT$ is the positive full-twist braid and the hat $\hat{\cdot}$ of a braid denotes its closure \textup{(}see figure \ref{braidfig}\textup{)}.
\end{thm}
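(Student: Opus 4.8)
My plan is to compute both sides through the Hecke-algebra picture of the HOMFLY polynomial and then isolate the two extreme powers of $v$. Let $\pi\colon B_n\to H_n$ be the natural representation of the braid group into the Iwahori--Hecke algebra, let $\{\chi_\lambda\}_{\lambda\vdash n}$ denote its irreducible characters, and let $W_\lambda$ be the weight of the corresponding minimal idempotent, i.e.\ the framed HOMFLY polynomial of the $\lambda$-coloured unknot. The Ocneanu (Markov) trace then gives
\[
    H(\widehat{B})=\sum_{\lambda\vdash n} W_\lambda\,\chi_\lambda\big(\pi(B)\big).
\]
The first point I would record is that all $v$-dependence sits in the weights: because the skein relation for $H$ involves only $z$, the matrices $\pi(\sigma_i)$ have eigenvalues $q$ and $-q^{-1}$ (with $z=q-q^{-1}$) and entries in $\Z[q^{\pm1}]$, so each $\chi_\lambda(\pi(B))$ is a Laurent polynomial in $q$ alone. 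In particular, extracting the coefficient of a power of $v$ acts only on the $W_\lambda$.

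Next I would use that $\FT$ is central in $B_n$, so $\pi(\FT)$ is central in $H_n$ and acts on the irreducible $\rho_\lambda$ by a scalar $\omega_\lambda$; hence $\chi_\lambda(\pi(\FT B))=\omega_\lambda\,\chi_\lambda(\pi(B))$. I would then identify this eigenvalue as the pure power
\[
    \omega_\lambda=q^{\,2\sum_{x}c(x)},
\]
where $c(x)=j-i$ is the content of the cell $x=(i,j)\in\lambda$. On the one-row and one-column partitions every $\sigma_i$ acts by $q$ resp.\ $-q^{-1}$, which already checks the formula there; in general it is the content-sum eigenvalue of the full twist coming from the Jucys--Murphy elements.

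The heart of the proof is the comparison of the top and bottom $v$-coefficients of a single weight $W_\lambda$. Using $W_\lambda=\dim_q V_\lambda/[N]$ with the hook--content expression $\dim_q V_\lambda=\prod_{x}[N+c(x)]/[h(x)]$ and writing each quantum integer as $[N+c]=(v^{-1}q^{c}-v\,q^{-c})/(q-q^{-1})$ (so $v$ plays the role of $q^{-N}$), I would extract the leading and trailing monomials in $v$ from the $n$ numerator factors and divide by the corresponding term of $[N]$. Writing $W_\lambda^{-}$ and $W_\lambda^{+}$ for the coefficients of $v^{-(n-1)}$ and $v^{+(n-1)}$ in $W_\lambda$, this yields
\[
    W_\lambda^{-}=A_\lambda\,q^{+\sum_{x}c(x)},\qquad
    W_\lambda^{+}=(-1)^{n-1}A_\lambda\,q^{-\sum_{x}c(x)},\qquad
    A_\lambda:=\frac{1}{(q-q^{-1})^{n-1}\prod_{x}[h(x)]}.
\]
Both the sign $(-1)^{n-1}$ and the reflection $q^{\sum c}\leftrightarrow q^{-\sum c}$ are forced: the bottom $v$-coefficient takes the $v^{-1}q^{c}$ part of every factor, the top $v$-coefficient the $-v\,q^{-c}$ part, and dividing by $[N]$ removes one power of $v$ and one sign.

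Finally I would assemble the pieces. Taking the coefficient of $v^{-(n-1)}$ in the trace formula for $\widehat{B}$ and of $v^{+(n-1)}$ in that for $\widehat{\FT B}$, and inserting $\omega_\lambda$, gives
\[
    H_{-}(\widehat{B})=\sum_{\lambda} A_\lambda\,q^{\sum_{x}c(x)}\,\chi_\lambda(\pi(B)),\qquad
    H_{+}(\widehat{\FT B})=(-1)^{n-1}\sum_{\lambda} A_\lambda\,q^{-\sum_{x}c(x)}\,\omega_\lambda\,\chi_\lambda(\pi(B)).
\]
Since $q^{-\sum_{x}c(x)}\,\omega_\lambda=q^{-\sum_{x}c(x)}\,q^{2\sum_{x}c(x)}=q^{\sum_{x}c(x)}$, the two right-hand sides agree up to the factor $(-1)^{n-1}$, which is exactly $H_{-}(\widehat{B})=(-1)^{n-1}H_{+}(\widehat{\FT B})$. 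I expect the main obstacle to be the previous paragraph: pinning down the exact sign $(-1)^{n-1}$ and the precise $q^{\pm\sum c}$ in $W_\lambda^{\pm}$, and confirming that $\omega_\lambda$ carries no $\lambda$-independent monomial in $q$, since any such stray factor would survive into the comparison. The remaining ingredients --- the trace formula, centrality of $\FT$, and commuting coefficient-extraction past a finite sum --- are routine.
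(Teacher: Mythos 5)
Your argument is correct, but it takes a genuinely different route from the paper's. The paper does not reprove Theorem \ref{braidFT} on its own; it is recovered as the single-box case of Theorem \ref{mainthm}, whose proof follows K\'alm\'an's original one: expand $B$ in the positive and the negative permutation-braid bases of $H_n$, apply $\HT^{\mp 1}$, kill every term except the one indexed by the longest element $\delta$ using the Murasugi--Przytycki sharpness criterion (lemma \ref{MPcor}), and identify the two surviving coefficients via lemma \ref{Kal}. You instead use Jones's trace formula, the centrality of $\FT$ with eigenvalue $q^{2\sum_x c(x)}$, and a hook--content computation of the extreme $v$-coefficients of the weights. Your route actually proves more: it exhibits both sides as the \emph{same} explicit sum $\sum_\lambda A_\lambda\,q^{\sum_x c(x)}\chi_\lambda(\pi(B))$, and the method tells you how every $v$-coefficient of $H$ transforms under full twisting, not just the extreme ones. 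What the paper's route buys is that it avoids the representation theory entirely (only the permutation-braid bases and a skein-theoretic vanishing result are needed) and, crucially, it localizes to a single braid box, which is what permits the generalization to knitted diagrams with several boxes; your argument is tied to one global braid axis and does not obviously extend to that setting. The two caveats you flag do check out: the full-twist eigenvalue is exactly $q^{2\sum_x c(x)}$ with no $\lambda$-independent monomial (verify it on the one-row and one-column representations and, say, the two-dimensional representation of $H_3$, or derive it from the Jucys--Murphy elements), and your normalization $W_\lambda=\dim_q V_\lambda/[N]$ is the correct one for the blackboard-framed invariant, as confirmed by $W_{(1)}=1$ and $\sum_\lambda f^\lambda W_\lambda=\{(v^{-1}-v)/z\}^{n-1}$ for the trivial $n$-strand braid (note that the coloured-unknot invariant itself is $\prod_x[N+c(x)]/[h(x)]$, i.e.\ $[N]W_\lambda$, so your parenthetical identification should be read with the unknot normalized to $1$ rather than to $[N]$).
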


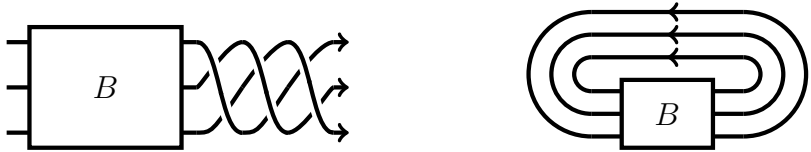
\begin{figure}[h]
\begin{center}
\subfigure{
    \begin{tikzpicture}
        \foreach \y in {0, 0.6, 1.2}{
        \draw[ultra thick, ->] (1.8, \y)--(2.0, \y);
        }
        \begin{knot}
            \strand[ultra thick] (0,1.2) .. controls +(0.3,0) and +(-0.3,0) .. (0.6,0);
            \strand[ultra thick] (0.6,1.2) .. controls +(0.3,0) and +(-0.3,0) .. (1.2,0);
            \strand[ultra thick] (1.2,1.2) .. controls +(0.3,0) and +(-0.3,0) .. (1.8,0);
            \strand[ultra thick] (0,0) .. controls +(0.2,0) .. (0.6,0.6);
            \strand[ultra thick] (0.6,0) .. controls +(0.2,0) .. (1.2,0.6);
            \strand[ultra thick] (0.6,0.6) .. controls +(0,0) and +(-0.2,0) .. (1.2,1.2);
            \strand[ultra thick] (1.2,0.6) .. controls +(0,0) and +(-0.2,0) .. (1.8,1.2);
            \strand[ultra thick] (0,0.6) .. controls +(0.05,0) and +(-0.2,0) .. (0.6,1.2);
            \strand[ultra thick] (1.2,0) .. controls +(0.2,0) and +(-0.05,0) .. (1.8,0.6);
        \end{knot}
        \foreach \y in {0,0.6,1.2}{
        \draw[ultra thick] (-0.2, \y)--(0, \y);
        }
        \draw[ultra thick]  (-2.2,1.4) rectangle (-0.2,-0.2);
        \node at (-1.2,0.6) {$B$};
        \foreach \y in {0,0.6,1.2}{
        \draw[ultra thick] (-2.5, \y)--(-2.2, \y);
        }
    \end{tikzpicture}
}
\hspace{50pt}
\subfigure{
    \begin{tikzpicture}
        \draw[ultra thick]  (-0.6,0.45) rectangle (0.6,-0.45);
        \node at (0,0) {$B$};
        \foreach \y in {-0.3,0,0.3}{
            \draw[ultra thick] (0.6,\y)--(1,\y);
            \draw[ultra thick] (-1,\y)--(-0.6,\y);
        }
        \begin{scope}[decoration={markings, mark=at position 0.5 with {\arrow{>}}}]
            \foreach \y in {1.35,1.05,0.75}{
                \draw[postaction={decorate}, ultra thick] (1,\y)--(-1,\y);
            }
        \end{scope}
        \foreach \y in {-0.3,0,0.3}{
            \draw[ultra thick] (1,\y) arc [start angle = -90, end angle = 90, radius = {0.525-\y}];
            \draw[ultra thick] (-1,\y) arc [start angle = 270, end angle = 90, radius = {0.525-\y}];
        }
    \end{tikzpicture}
}
\end{center}
\caption{$\FT B$ (left) and $\widehat{B}$ (right) for a braid $B$}\label{braidfig}
\end{figure}

This formula is categorified in \cite{Nakagane, EMAN}, that is to say, it is generalized for the Khovanov--Rozansky HOMFLY homology \cite{KR}.
In this paper, we will explore a different type of generalization, namely we will extend the formula to \textit{knitted diagrams}, which include braid closures.

The definitions and the generalized formula are given in section \ref{knitted}.
We prepare some terminology from the Hecke algebra in section \ref{hecke}, then we prove the formula in section \ref{proof}.

\section{knitted diagrams}\label{knitted}
We will define knitted diagrams as a generalization of closed braid diagrams.
They are obtained by arranging several braid diagrams on the plane and connecting them appropriately.
\begin{defn} 
    Let $D \subset \R^2$ be an oriented link diagram.
    An \textit{$n$-strand braid box} in $D$ is a rectangle $B \subset \R^2$ such that the restriction of $D$ in $B$ is an $n$-strand braid diagram.
\end{defn}

We note that 
since all strands cross a braid box in the same direction, 
a Seifert circle of $D$ passes through each braid box in $D$ at most once.

The following abuse of notation will be useful: for a braid box $B$ in $D$, the symbol $B$ will also denote the braid represented by the diagram $D \cap B$.

\begin{defn}\label{pattern}
    Let $D$ be an oriented link diagram.
    A \textit{knitting pattern} of $D$ is a finite set $\{B_i\}_{i=1}^m$ of braid boxes in $D$ such that:
    \begin{itemize}
        \item the braid boxes $B_i$ are disjoint,
        \item every crossing of $D$ is contained in some box $B_i$,
        \item there is no pair of Seifert circles of $D$ both of which pass through two common boxes in $\{B_i\}$.
    \end{itemize}
    A \textit{knitted diagram} is a pair $D = (D, \{B_i\})$ of a diagram $D$ and its knitting pattern $\{B_i\}$.
\end{defn}

Obviously, a closed braid diagram can be considered as a knitted diagram with a single braid box (as in figure \ref{braidfig}).

\begin{ex}
Let $G$ be a simple plane bipartite graph.
By reversing Seifert's algorithm, a special link diagram $D$ can be obtained from $G$, up to crossing signs and orientations, so that the Seifert graph of $D$ is $G$.
By putting a $2$-strand braid box on each crossing of $D$, we obtain a knitted diagram over $D$.
(The simplicity of $G$ is needed for the third condition on knitting patterns in definition \ref{pattern}. See figure \ref{knittedfig} for an example.)
\end{ex}

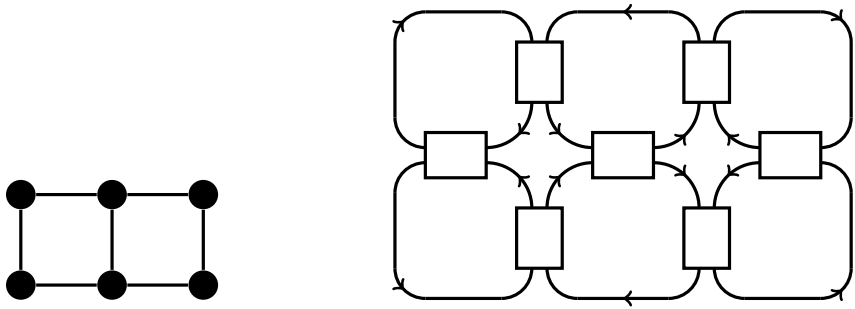
\begin{figure}[h]
\begin{center}
\subfigure{
        \begin{tikzpicture}
            \node[circle, fill] (v2) at (0,0) {};
            \node[circle, fill] (v1) at (0,1.2) {};
            \node[circle, fill] (v4) at (1.2,1.2) {};
            \node[circle, fill] (v3) at (1.2,0) {};
            \node[circle, fill] (v6) at (2.4,1.2) {};
            \node[circle, fill] (v5) at (2.4,0) {};
            \draw[very thick]  (v1) edge (v2);
            \draw[very thick]  (v1) edge (v4);
            \draw[very thick]  (v2) edge (v3);
            \draw[very thick]  (v3) edge (v4);
            \draw[very thick]  (v3) edge (v5);
            \draw[very thick]  (v4) edge (v6);
            \draw[very thick]  (v5) edge (v6);
        \end{tikzpicture}
}
\hspace{50pt}
\subfigure{
    \begin{tikzpicture}
        \draw [very thick] (-1.4,1.5) rectangle (-0.8,0.7);
        \draw [very thick] (-2.6,0.3) rectangle (-1.8,-0.3);
        \draw [very thick] (-1.4,-0.7) rectangle (-0.8,-1.5);
        \draw [very thick] (-0.4,0.3) rectangle (0.4,-0.3);
        \draw [very thick] (0.8,1.5) rectangle (1.4,0.7);
        \draw [very thick] (0.8,-0.7) rectangle (1.4,-1.5);
        \draw [very thick] (1.8,0.3) rectangle (2.6,-0.3);
        \begin{scope}[very thick, decoration={markings, mark=at position 0.5 with {\arrow{>}}}]
            \foreach \t in {1, -1}{
                \draw[postaction={decorate}] (-1.2,0.7*\t) arc [start angle = 0, delta angle = -90*\t, radius = 0.6];
		        \draw[postaction={decorate}] (-1,0.7*\t) arc [start angle = 180, delta angle = 90*\t, radius = 0.6];
		        \draw[postaction={decorate}] (0.4,0.1*\t) arc [start angle = -90*\t, delta angle = 90*\t, radius = 0.6];
                \draw[postaction={decorate}] (1.8,0.1*\t) arc [start angle = -90*\t, delta angle = -90*\t, radius = 0.6];
                \draw (-1.6,1.9*\t) arc [start angle = 90*\t, delta angle = -90*\t, radius = 0.4];
		        \draw (-0.6,1.9*\t) arc [start angle = 90*\t, delta angle = 90*\t, radius = 0.4];
		        \draw (1,1.5*\t) arc [start angle = 0, delta angle = 90*\t, radius = 0.4];
		        \draw (1.2,1.5*\t) arc [start angle = 180, delta angle = -90*\t, radius = 0.4];
		        \draw (-2.6,0.1*\t) arc [start angle = -90*\t, delta angle = -90*\t, radius = 0.4];
		        \draw (3,0.5*\t) arc [start angle = 0, delta angle = -90*\t, radius = 0.4];
		        \draw[postaction={decorate}] (2.6,1.9*\t) arc [start angle = 90*\t, delta angle = -90*\t, radius = 0.4];
		        \draw[postaction={decorate}] (-3,1.5*\t) arc [start angle = 180, delta angle = -90*\t, radius = 0.4];
		        \draw[postaction={decorate}] (0.6,1.9*\t)--(-0.6,1.9*\t);
		        \draw (-2.6,1.9*\t)--(-1.6,1.9*\t);
		        \draw (2.6,1.9*\t)--(1.6,1.9*\t);
		        \draw (-3,0.5*\t)--(-3,1.5*\t);
		        \draw (3,1.5*\t)--(3,0.5*\t);
            }
        \end{scope}
    \end{tikzpicture}
}
\end{center}
\caption{A knitted diagram constructed from a graph}\label{knittedfig}
\end{figure}

We will often replace the braids $B_i$ in a knitted diagram $(D, \{B_i\})$.
For a set of braids $\{\beta_i\}$ to replace $\{B_i\}$, the new knitted diagram is denoted by $D' = (D, \{B_i \to \beta_i\})$.

Now we can state our main result, which is a generalization of theorem \ref{braidFT}.

\begin{thm}\label{mainthm}
    Let $D = (D, \{B_i\}_{i=1}^m)$ be a knitted diagram.
    By adding a positive full-twist to each $B_i$, we obtain a new knitted diagram $\FT D = (D, \{B_i \to \FT B_i\})$.
    Then we have $H_{-}(D) = (-1)^{s(D)-1}H_{+}(\FT D)$, where $s(D)$ is the number of Seifert circles of $D$.
\end{thm}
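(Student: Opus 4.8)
The plan is to reduce everything to the Hecke-algebra computation of the framed HOMFLY polynomial developed in section \ref{hecke}. For a knitted diagram $D=(D,\{B_i\})$ each braid box $B_i$ determines an element of the Hecke algebra $\mathcal{H}_{n_i}$ on its $n_i$ strands, and $H(D)$ is the global trace obtained by contracting the tensor of these elements along the arcs of $D$ that join the boxes, the pattern of this contraction being exactly the connection data of $D$; moreover each box is traversed by pairwise-distinct Seifert circles, as noted after the definition of a braid box. My first step is to show that reading off the extreme coefficient $H_{+}(D)$ (resp.\ $H_{-}(D)$) is the same as replacing the element of each $\mathcal{H}_{n_i}$ by its image in the distinguished extremal part isolated in section \ref{hecke} (morally the symmetric, resp.\ antisymmetric, part), on which each Hecke generator acts through one root of the quadratic relation $T_i^2 = zT_i + \id$. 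The MFW bounds guarantee that $v^{\pm(s(D)-1)}$ really are the extreme powers, so that only these pieces survive, and the third axiom for a knitting pattern is precisely what makes the extreme-degree part factorize: since no two Seifert circles pass through two common boxes, the idempotent-decorated diagram must resolve into exactly $s(D)$ disjoint circles, with no degree-lowering interference between boxes.

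The second step is to run the full-twist through this extremal structure. Since $\FT$ is central in $\Br_{n_i}$, it acts by a scalar on each extremal summand of $\mathcal{H}_{n_i}$; because the two roots of the quadratic relation multiply to $-\id$, passing from the antisymmetric datum (which computes $H_{-}$) to the symmetric one (which computes $H_{+}$) contributes the factor $(-1)^{n_i-1}$ locally. For a single box this is just the statement that $s(D)=n_i$ together with Theorem \ref{braidFT}, so K\'alm\'an's formula is recovered as the one-box case. In general it yields a box-local identity: the antisymmetric datum computing $H_{-}$ at $B_i$ becomes, after inserting a positive full-twist, the symmetric datum computing $H_{+}$ at $\FT B_i$, up to the local scalar.

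The substance of the argument, and the step I expect to be the main obstacle, is to assemble these local identities into the single global sign $(-1)^{s(D)-1}$. This sign is \emph{not} the naive product $\prod_i(-1)^{n_i-1}$ of the box contributions: already for the split union of two closed $2$-braids one has $\sum_i(n_i-1)=2$ while $s(D)-1=3$, the discrepancy being supplied by the factor $(v^{-1}-v)/z$, whose top and bottom $v$-coefficients differ in sign and which appears whenever disjoint pieces are merged. I would therefore track the global contraction itself: each of the $s(D)$ circles produced by the extremal idempotents contributes, in the trace, a factor whose top- and bottom-degree coefficients differ by a sign, and one must show that these per-circle signs, the per-box signs $(-1)^{n_i-1}$, and the combinatorics of how the circles thread the boxes conspire to exactly $(-1)^{s(D)-1}$ while pinning the extreme power at $v^{s(D)-1}$. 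That this is the correct target is already forced by the split-union computation above, where the signs of $(v^{-1}-v)/z$ and the Seifert-circle counts of the pieces combine correctly; making this bookkeeping rigorous for an arbitrary knitting pattern, with the third axiom guaranteeing that the circle count is exactly $s(D)$ and that the MFW bound stays sharp, is the crux of the proof.
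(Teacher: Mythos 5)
Your proposal has a genuine gap --- in fact you flag it yourself: the entire third paragraph defers ``the crux'' (assembling the local data into the global sign $(-1)^{s(D)-1}$) without carrying it out, and the first two steps are asserted rather than proved. Concretely: (i) the claim that $H_{\pm}(D)$ is computed by projecting each box onto a one-dimensional ``extremal'' summand of its Hecke algebra is exactly the hard part, not a consequence of the MFW bounds, which only bound the $v$-degree and say nothing about which pieces of a Hecke-algebra decomposition contribute to the extreme coefficient; moreover the one-dimensional representations send $\sigma_i$ to a root of $t^2-zt-1$, which does not lie in $\Z[z,z^{-1}]$, so this decomposition is not even available over the paper's ground ring without passing to a variable $q$ with $z=q-q^{-1}$. (ii) The sign mechanism in your second step does not work as stated: in the two one-dimensional representations the full twist on $n_i$ strands acts by $q^{\pm n_i(n_i-1)}$, and the fact that the two roots multiply to $-1$ does not by itself produce a factor $(-1)^{n_i-1}$; you then correctly observe that the global sign is not the product of these local signs anyway, which leaves the sign unaccounted for.

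For comparison, the paper avoids idempotents entirely. It expands each box in the positive and negative permutation-braid bases, applies a half-twist $\HT^{\mp 1}$ to each box, and invokes the Murasugi--Przytycki refinement of the MFW bound (lemma \ref{MPcor}): every nontrivial negative permutation braid has two adjacent strands with exactly one (negative) crossing between them, and the third axiom of definition \ref{pattern} guarantees that this is the \emph{only} crossing between the corresponding Seifert circles of the whole diagram, so all terms die except the one in which every box becomes trivial. The surviving term is $((v^{-1}-v)/z)^{s(D)-1}$, whose top and bottom coefficients differ by exactly $(-1)^{s(D)-1}$ --- that is the sole source of the sign --- and the two surviving coefficients $a_{\delta}^i$ and $b_{\delta}^i$ are identified by K\'alm\'an's lemma \ref{Kal}, an ingredient your outline never uses and for which your representation-theoretic setup has no obvious substitute. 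If you want to pursue your route, you would need to (a) prove the extremal-projection claim for multi-box contractions, which will require lemma \ref{MPcor} or an equivalent in any case, and (b) find the analogue of $a_\delta=b_\delta$ relating the two one-dimensional pieces across a half-twist; at that point you would essentially have reconstructed the paper's argument in a less convenient basis.
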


Figure \ref{thmex} shows a knitted diagram $D$ of a knot and its full-twisted diagram $\FT D$.
The framed HOMFLY polynomial of them are shown as tables in figure \ref{poly}.
In each table, the coefficient of $v^{s}z^{t}$ is displayed at the point $(s,t)$ unless it is zero.
The left table is for $D$, and its bottom-left coefficient is at $(-6,0)$.
The right table is for $\FT D$, and its bottom-right coefficient is at $(6,0)$.
We emphasize that one step between adjacent coefficients in the table is by the vector $(2,0)$ or $(0,2)$.
I.e., we read off from the table that $H(D) = (2+3z^2+z^4)v^{-6}-(1+2z^2+3z^4+z^6)v^{-4}-(1+2z^2+3z^4+z^6)v^{-2}+(2+3z^2+z^4)v^{0}-v^2$.
Since $H_{-}(D) = H_{+}(\FT D) = 2+3z^2+z^4$, theorem \ref{mainthm} is valid for this example.

\begin{figure}[h]
\begin{center}
\subfigure{
    \begin{tikzpicture}
    \begin{scope}[gray, thick]
    \foreach \y in {1,-1}{
    \draw (-0.6, 1.95 * \y) rectangle (0.6, 4.45 * \y);
    \foreach \x in {1, -1}{
    \draw (2.1 * \x, 0.9 * \y) rectangle (1.2 * \x, 0.2 * \y);
    }
    }
    \end{scope}
    \begin{scope}[very thick, decoration={markings, mark=at position 0.5 with {\arrow{>}}}]
        \draw[postaction={decorate}] (0,-1.1) -- (0,1.1);
            
        \foreach \x in {1, -1}{
        \draw (0.6 * \x,0.4) .. controls (0.3 * \x,0.4) and (0.3 * \x,-0.4) .. (0.6 * \x,-0.4);
        \draw (2.7 * \x,0.4) .. controls (3 * \x,0.4) and (3 * \x,-0.4) .. (2.7 * \x,-0.4);
            
        \foreach \y in {1,-1}{
        \draw (0.4 * \x,1.1 * \y) .. controls (0.4 * \x,0.7 * \y) and (0.4 * \x,0.7 * \y) .. (0.6 * \x,0.7 * \y);
        
        \draw (2.7 * \x, 0.7 * \y) arc [start angle = -90 * \y, delta angle = 90 * \x * \y, radius =0.2];
        \draw (2.7 * \x, 5.5 * \y) arc [start angle = 90 * \y, delta angle = -90 * \x * \y, radius =0.2];
        \draw (0.6 * \x, 5.5 * \y) arc [start angle = 90 * \y, delta angle = 90 * \x * \y, radius =0.2];
        \draw (0.6 * \x,5.5 * \y) -- (2.7 * \x,5.5 * \y);

        \draw (0.6 * \x,0.4 * \y) -- (1.3 * \x, 0.4 * \y);
        \draw (2 * \x, 0.4 * \y) -- (2.7 * \x, 0.4 * \y);
        \draw (0.6 * \x,0.7 * \y) -- (1.3 * \x, 0.7 * \y);
        \draw (2 * \x, 0.7 * \y) -- (2.7 * \x, 0.7 * \y);
        }
        \draw[postaction={decorate}] (2.9 * \x,5.3) -- (2.9 * \x,0.9);
        \draw[postaction={decorate}] (2.9 * \x,-0.9) -- (2.9 * \x,-5.3);
        }
        \foreach \y in {1,-1}{
        \draw (0,5.3 * \y) -- (0, 5.7 * \y);
        \draw (0,5.7 * \y) arc [start angle = 0, delta angle = 90 * \y, radius = 0.2];
        \draw (-0.2,5.9 * \y) --(-2.7,5.9 * \y);
        \draw (-2.7,5.9 * \y) arc [start angle = 90 * \y, delta angle = 90 * \y, radius = 0.6];
        }
        \draw[postaction={decorate}] (-3.3, 5.3) -- (-3.3, -5.3);
        \begin{knot}[clip width=2,flip crossing/.list = {2,3,5,6,10,12}]
        \strand (1.3,0.7) .. controls +(0.3,0) and +(-0.3,0) .. (2,0.4);
        \strand (1.3,0.4) .. controls +(0.3,0) and +(-0.3,0) .. (2,0.7);
        \strand (-1.3,0.7) .. controls +(-0.3,0) and +(0.3,0) .. (-2,0.4);
        \strand (-1.3,0.4) .. controls +(-0.3,0) and +(0.3,0) .. (-2,0.7);
        \strand (1.3,-0.7) .. controls +(0.3,0) and +(-0.3,0) .. (2,-0.4);
        \strand (1.3,-0.4) .. controls +(0.3,0) and +(-0.3,0) .. (2,-0.7);
        \strand (-1.3,-0.7) .. controls +(-0.3,0) and +(0.3,0) .. (-2,-0.4);
        \strand (-1.3,-0.4) .. controls +(-0.3,0) and +(0.3,0) .. (-2,-0.7);
        \strand (0.4,2.45) .. controls +(0,0.3) and +(0,-0.3) .. (-0.4,3.35) .. controls +(0,0.2) and +(0,-0.2) .. (0, 3.95);
        \strand (-0.4,2.75) .. controls +(0,0.3) and +(0,-0.3) .. (0.4,3.65);
        \strand (0,2.45) .. controls +(0,0.2) and +(0,-0.2) .. (0.4,3.05) .. controls +(0,0.3) and +(0,-0.3) .. (-0.4,3.95);
        \draw (0.4,1.1) -- (0.4,2.45);
        \draw (0,1.1) -- (0,2.45);
        \draw (-0.4,1.1) -- (-0.4,2.75);
        \draw (0.4,3.65) -- (0.4,5.3);
        \draw (0,3.95) -- (0,5.3);
        \draw (-0.4,3.95) -- (-0.4,5.3);
        \strand (0,-4.25) .. controls +(0,0.2) and +(0,-0.2) .. (0.4,-3.65) .. controls +(0,0.3) and +(0,-0.3) .. (-0.4,-2.75) .. controls +(0,0.2) and +(0,-0.2) .. (0,-2.15);
        \strand (0.4,-4.25) .. controls +(0,0.2) and +(0,-0.2) .. (0,-3.65) .. controls +(0,0.2) and +(0,-0.2) .. (0.4,-3.05);
        \strand (-0.4,-3.35) .. controls +(0,0.2) and +(0,-0.2) .. (0,-2.75) .. controls +(0,0.2) and +(0,-0.2) .. (-0.4, -2.15);
        \draw (0.4,-5.3) -- (0.4,-4.25);
        \draw (0,-5.3) -- (0,-4.25);
        \draw (-0.4,-5.3) -- (-0.4,-3.35);
        \draw (0.4,-3.05) -- (0.4,-1.1);
        \draw (0,-2.15) -- (0,-1.1);
        \draw (-0.4,-2.15) -- (-0.4,-1.1);
        \end{knot}
    \end{scope}
    \end{tikzpicture}
}
\hspace{50pt}
\subfigure{
    \begin{tikzpicture}
        \begin{scope}[gray, thick]
        \foreach \y in {1,-1}{
        \draw (-0.6, 5.3 * \y) rectangle (0.6, 1.1 * \y);
        \foreach \x in {1, -1}{
        \draw (2.7 * \x, 0.9 * \y) rectangle (0.6 * \x, 0.2 * \y);
        }
        }
        \end{scope}
        \begin{scope}[very thick, decoration={markings, mark=at position 0.5 with {\arrow{>}}}]
        \draw[postaction={decorate}] (0,-1.1) -- (0,1.1);
        \foreach \x in {1, -1}{
        \draw (0.6 * \x,0.4) .. controls (0.3 * \x,0.4) and (0.3 * \x,-0.4) .. (0.6 * \x,-0.4);
        \draw (2.7 * \x,0.4) .. controls (3 * \x,0.4) and (3 * \x,-0.4) .. (2.7 * \x,-0.4);

        \foreach \y in {1,-1}{
        \draw (0.4 * \x,1.1 * \y) .. controls (0.4 * \x,0.7 * \y) and (0.4 * \x,0.7 * \y) .. (0.6 * \x,0.7 * \y);
        \draw (2.7 * \x, 0.7 * \y) arc [start angle = -90 * \y, delta angle = 90 * \x * \y, radius =0.2];
        \draw (2.7 * \x, 5.5 * \y) arc [start angle = 90 * \y, delta angle = -90 * \x * \y, radius =0.2];
        \draw (0.6 * \x, 5.5 * \y) arc [start angle = 90 * \y, delta angle = 90 * \x * \y, radius =0.2];
        \draw (0.6 * \x,5.5 * \y) -- (2.7 * \x,5.5 * \y);
        }
        \draw[postaction={decorate}] (2.9 * \x,5.3) -- (2.9 * \x,0.9);
        \draw[postaction={decorate}] (2.9 * \x,-0.9) -- (2.9 * \x,-5.3);
        }
        \foreach \y in {1,-1}{
        \draw (0,5.3 * \y) -- (0, 5.7 * \y);
        \draw (0,5.7 * \y) arc [start angle = 0, delta angle = 90 * \y, radius = 0.2];
        \draw (-0.2,5.9 * \y) --(-2.7,5.9 * \y);
        \draw (-2.7,5.9 * \y) arc [start angle = 90 * \y, delta angle = 90 * \y, radius = 0.6];
        }
        \draw[postaction={decorate}] (-3.3, 5.3) -- (-3.3, -5.3);

        \begin{knot}[clip width=2,flip crossing/.list = {2,4,6,7,9,11,13,14,18,20,21,24,26,28,30,31}]
        \strand (0.6,0.7) .. controls +(0.2,0) and +(-0.2,0) .. (1.3,0.4) .. controls +(0.2,0) and +(-0.2,0) .. (2,0.7) .. controls +(0.2,0) and +(-0.2,0) .. (2.7,0.4);
        \strand (0.6,0.4) .. controls +(0.2,0) and +(-0.2,0) .. (1.3,0.7) .. controls +(0.2,0) and +(-0.2,0) .. (2,0.4) .. controls +(0.2,0) and +(-0.2,0) .. (2.7,0.7);
        \strand (-0.6,0.7) .. controls +(-0.2,0) and +(0.2,0) .. (-1.3,0.4) .. controls +(-0.2,0) and +(0.2,0) .. (-2,0.7) .. controls +(-0.2,0) and +(0.2,0) .. (-2.7,0.4);
        \strand (-0.6,0.4) .. controls +(-0.2,0) and +(0.2,0) .. (-1.3,0.7) .. controls +(-0.2,0) and +(0.2,0) .. (-2,0.4) .. controls +(-0.2,0) and +(0.2,0) .. (-2.7,0.7);
        \strand (0.6,-0.7) .. controls +(0.2,0) and +(-0.2,0) .. (1.3,-0.4) .. controls +(0.2,0) and +(-0.2,0) .. (2,-0.7) .. controls +(0.2,0) and +(-0.2,0) .. (2.7,-0.4);
        \strand (0.6,-0.4) .. controls +(0.2,0) and +(-0.2,0) .. (1.3,-0.7) .. controls +(0.2,0) and +(-0.2,0) .. (2,-0.4) .. controls +(0.2,0) and +(-0.2,0) .. (2.7,-0.7);
        \strand (-0.6,-0.7) .. controls +(-0.2,0) and +(0.2,0) .. (-1.3,-0.4) .. controls +(-0.2,0) and +(0.2,0) .. (-2,-0.7) .. controls +(-0.2,0) and +(0.2,0) .. (-2.7,-0.4);
        \strand (-0.6,-0.4) .. controls +(-0.2,0) and +(0.2,0) .. (-1.3,-0.7) .. controls +(-0.2,0) and +(0.2,0) .. (-2,-0.4) .. controls +(-0.2,0) and +(0.2,0) .. (-2.7,-0.7);
        \strand (0.4,1.4) .. controls +(0,0.3) and +(0,-0.3) .. (-0.4,2.3) .. controls +(0,0.2) and +(0,-0.2) .. (0, 2.9);
        \strand (-0.4,1.7) .. controls +(0,0.3) and +(0,-0.3) .. (0.4,2.6);
        \strand (0,1.4) .. controls +(0,0.2) and +(0,-0.2) .. (0.4,2) .. controls +(0,0.3) and +(0,-0.3) .. (-0.4,2.9);
        \draw (0.4,1.1) -- (0.4, 1.4);
        \draw (0,1.1) -- (0,1.4);
        \draw (-0.4,1.1) -- (-0.4,1.7);
        \draw (0.4,2.6) -- (0.4,2.9);
        \strand (-0.4,2.9) .. controls +(0,0.3) and +(0,-0.3) .. (0.4,3.8) .. controls +(0,0.3) and +(0,-0.3) .. (-0.4, 4.7);
        \strand (0.4,3.2) .. controls +(0,0.3) and +(0,-0.3) .. (-0.4,4.1) .. controls +(0,0.3) and +(0,-0.3) .. (0.4, 5);
        \strand (0,2.9) .. controls +(0,0.2) and +(0,-0.2) .. (-0.4,3.5) .. controls +(0,0.3) and +(0,-0.3) .. (0.4,4.4) .. controls +(0,0.2) and +(0,-0.2) .. (0,5);
        \draw (0.4,5) -- (0.4, 5.3);
        \draw (0,5) -- (0,5.3);
        \draw (-0.4,4.7) -- (-0.4,5.3);
        \draw (0.4,3.2) -- (0.4,2.9);
        \strand (0,-5.3) .. controls +(0,0.2) and +(0,-0.2) .. (0.4,-4.7) .. controls +(0,0.3) and +(0,-0.3) .. (-0.4,-3.8) .. controls +(0,0.2) and +(0,-0.2) .. (0,-3.2);
        \strand (0.4,-5.3) .. controls +(0,0.2) and +(0,-0.2) .. (0,-4.7) .. controls +(0,0.2) and +(0,-0.2) .. (0.4,-4.1);
        \strand (-0.4,-4.4) .. controls +(0,0.2) and +(0,-0.2) .. (0,-3.8) .. controls +(0,0.2) and +(0,-0.2) .. (-0.4, -3.2);
        \draw (0.4,-4.1) -- (0.4,-3.2);
        \draw (-0.4,-5.3) -- (-0.4,-4.4);
        \strand (-0.4,-3.2) .. controls +(0,0.3) and +(0,-0.3) .. (0.4,-2.3) .. controls +(0,0.3) and +(0,-0.3) .. (-0.4, -1.4);
        \strand (0.4,-2.9) .. controls +(0,0.3) and +(0,-0.3) .. (-0.4,-2) .. controls +(0,0.3) and +(0,-0.3) .. (0.4, -1.1);
        \strand (0,-3.2) .. controls +(0,0.2) and +(0,-0.2) .. (-0.4,-2.6) .. controls +(0,0.3) and +(0,-0.3) .. (0.4,-1.7) .. controls +(0,0.2) and +(0,-0.2) .. (0,-1.1);
        \draw (0.4,-3.2) -- (0.4,-2.9);
        \draw (-0.4, -1.4) --(-0.4,-1.1);
        \end{knot}
    \end{scope}
    \end{tikzpicture}
}
\end{center}
\caption{A knitted diagram $D$ and $\FT D$}\label{thmex}
\end{figure}
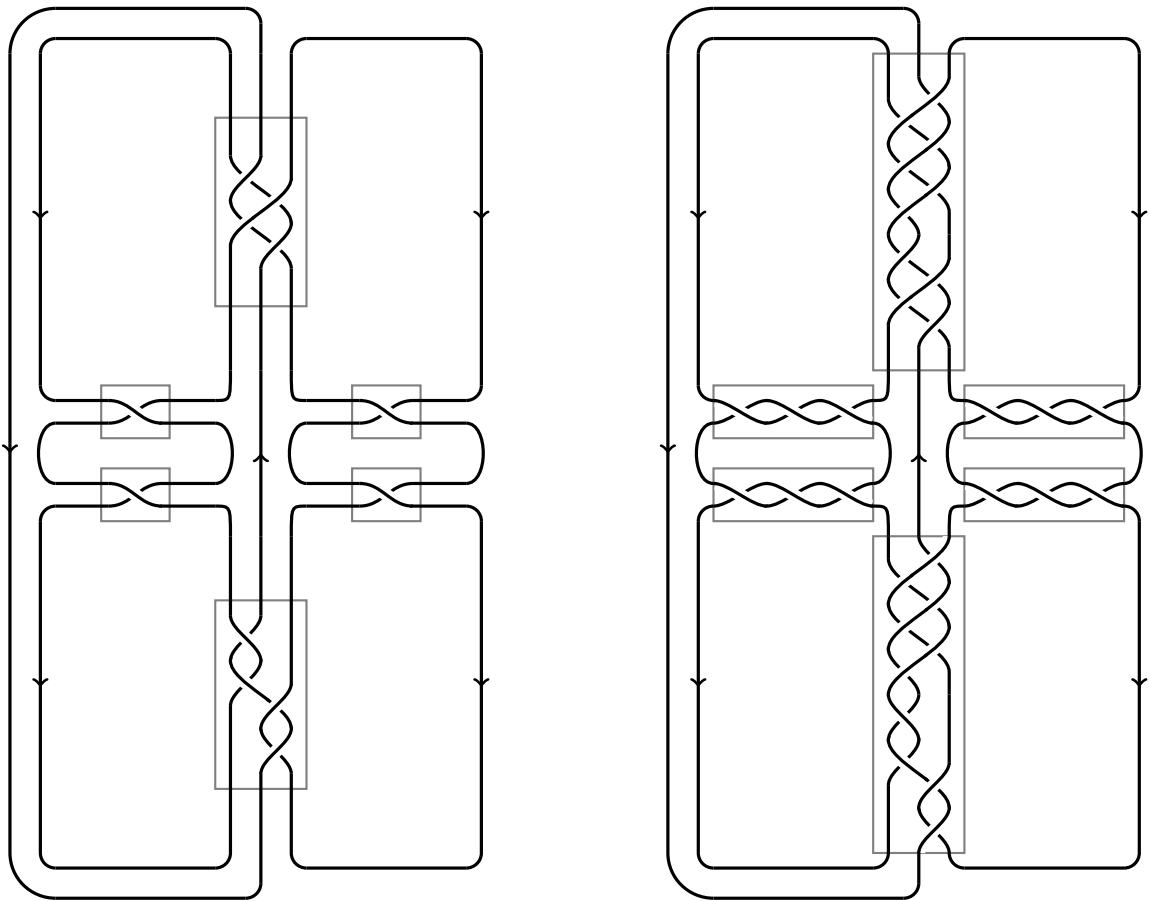

\begin{figure}[b]
\begin{center}
\subfigure{
\begin{tikzpicture}
\node (z) at (1,7) {$z$};
\node (v) at (2,6) {$v$};
\draw[ultra thick, ->] (1, 6) -- (1,6.7);
\draw[ultra thick, ->] (1, 6) -- (1.7,6);
\draw[rounded corners=10pt] (-0.4,-0.4)--(-0.4,2.4)--(0.4,2.4)--(0.4,-0.4)--cycle;
\node at (0,0) {\scriptsize 2};
\node at (1,0) {\scriptsize -1};
\node at (2,0) {\scriptsize -1};
\node at (3,0) {\scriptsize 2};
\node at (4,0) {\scriptsize -1};
\node at (0,1) {\scriptsize 3};
\node at (1,1) {\scriptsize -2};
\node at (2,1) {\scriptsize -2};
\node at (3,1) {\scriptsize 3};
\node at (0,2) {\scriptsize 1};
\node at (1,2) {\scriptsize -3};
\node at (2,2) {\scriptsize -3};
\node at (3,2) {\scriptsize 1};
\node at (1,3) {\scriptsize -1};
\node at (2,3) {\scriptsize -1};

\end{tikzpicture}
}
\hspace{50pt}
\subfigure{
\begin{tikzpicture}
    \draw[rounded corners=10pt] (5.6,-0.4)--(5.6,2.4)--(6.4,2.4)--(6.4,-0.4)--cycle;
    \node at (0,0) {\scriptsize 112};
    \node at (1,0) {\scriptsize -336};
    \node at (2,0) {\scriptsize 419};
    \node at (3,0) {\scriptsize -281};
    \node at (4,0) {\scriptsize 107};
    \node at (5,0) {\scriptsize -22};
    \node at (6,0) {\scriptsize 2};
    \node at (0,1) {\scriptsize 1008};
    \node at (1,1) {\scriptsize -2384};
    \node at (2,1) {\scriptsize 2328};
    \node at (3,1) {\scriptsize -1212};
    \node at (4,1) {\scriptsize 344};
    \node at (5,1) {\scriptsize -49};
    \node at (6,1) {\scriptsize 3};
    \node at (0,2) {\scriptsize 3864};
    \node at (1,2) {\scriptsize -6816};
    \node at (2,2) {\scriptsize 4921};
    \node at (3,2) {\scriptsize -1892};
    \node at (4,2) {\scriptsize 359};
    \node at (5,2) {\scriptsize -27};
    \node at (6,2) {\scriptsize 1};
    \node at (0,3) {\scriptsize 8416};
    \node at (1,3) {\scriptsize -10076};
    \node at (2,3) {\scriptsize 4817};
    \node at (3,3) {\scriptsize -1314};
    \node at (4,3) {\scriptsize 125};
    \node at (5,3) {\scriptsize 4};
    \node at (0,4) {\scriptsize 11655};
    \node at (1,4) {\scriptsize -7747};
    \node at (2,4) {\scriptsize 1727};
    \node at (3,4) {\scriptsize -498};
    \node at (4,4) {\scriptsize 13};
    \node at (5,4) {\scriptsize 6};
    \node at (0,5) {\scriptsize 10833};
    \node at (1,5) {\scriptsize -1804};
    \node at (2,5) {\scriptsize -554};
    \node at (3,5) {\scriptsize -424};
    \node at (4,5) {\scriptsize 36};
    \node at (5,5) {\scriptsize 1};
    \node at (0,6) {\scriptsize 6925};
    \node at (1,6) {\scriptsize 2052};
    \node at (2,6) {\scriptsize -508};
    \node at (3,6) {\scriptsize -509};
    \node at (4,6) {\scriptsize 33};
    \node at (0,7) {\scriptsize 3055};
    \node at (1,7) {\scriptsize 2206};
    \node at (2,7) {\scriptsize 112};
    \node at (3,7) {\scriptsize -317};
    \node at (4,7) {\scriptsize 10};
    \node at (0,8) {\scriptsize 914};
    \node at (1,8) {\scriptsize 1013};
    \node at (2,8) {\scriptsize 216};
    \node at (3,8) {\scriptsize -101};
    \node at (4,8) {\scriptsize 1};
    \node at (0,9) {\scriptsize 177};
    \node at (1,9) {\scriptsize 257};
    \node at (2,9) {\scriptsize 86};
    \node at (3,9) {\scriptsize -16};
    \node at (0,10) {\scriptsize 20};
    \node at (1,10) {\scriptsize 35};
    \node at (2,10) {\scriptsize 15};
    \node at (3,10) {\scriptsize -1};
    \node at (0,11) {\scriptsize 1};
    \node at (1,11) {\scriptsize 2};
    \node at (2,11) {\scriptsize 1};
\end{tikzpicture}
}
\end{center}
\caption{The framed HOMFLY polynomials of $D$ and $\FT D$}\label{poly}
\end{figure}

\section{Hecke algebra}\label{hecke}
We need some terminology from the (type $A$) Hecke algebra to prove our result.

\begin{defn} 
    The Hecke algebra $H_n$ is obtained from the group algebra of the braid group $\mathrm{Br}_n$, over the ring $\Z[z, z^{-1}]$, by imposing the skein relation
    \[
        \sigma_i -\sigma_i^{-1} = z \quad (i = 1,\dots, n-1),
    \]
    where $\sigma_1, \ldots, \sigma_{n-1}$ are the standard generators of $\mathrm{Br}_n$.
\end{defn}

As a $\Z[z, z^{-1}]$-module, the Hecke algebra $H_n$ has several well-known bases.
The positive permutation braids (PPBs) $\{T_w\}_{w \in S_n}$ and the negative permutation braids (NPBs) $\{U_w\}_{w \in S_n}$ are used as bases here.
They are indexed by the symmetric group $S_n$, and the braid $T_w$ (resp.\ $U_w$) is the positive (resp.\ negative) braid representing the permutation $w$ with the fewest crossings possible in its diagram.
We note that $T_{\delta}$ is the positive half-twist braid $\HT$, where here $\delta$ is the longest element in $S_n$:
\[
    \delta = \left(
        \begin{array}{ccccc}
          1 & 2 & \cdots & n-1 & n \\
          n & n-1 & \cdots & 2 & 1
        \end{array}
      \right).
\]

An important property of these bases is that multiplying by $\HT$ from the left (or right) yields a bijection from $\{U_w\}_{w \in S_n}$ to $\{T_w\}_{w \in S_n}$.
The following fact is also essential for the full-twist phenomenon.

\begin{lemma}[{\cite[proposition 3.1]{Kalman}}]\label{Kal}
    Let $x$ be an element in the Hecke algebra $H_n$.
    We expand $x$ in terms of PPBs and NPBs in $H_n$ as follows:
    \[
        x = \sum_{w \in S_n} a_wT_w =  \sum_{w \in S_n} b_wU_w \quad (a_w, b_w \in \Z[z, z^{-1}]).
    \]
    Then we have $a_{\delta} = b_{\delta}$, where $\delta$ is the longest element in $S_n$.
\end{lemma}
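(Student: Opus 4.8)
The plan is to deduce the statement from a single structural fact about the transition matrix between the two bases, together with the maximality of $\delta$ in the Bruhat order. Concretely, I would first establish the \emph{unitriangularity} of the change of basis from NPBs to PPBs: for every $w \in S_n$,
\[
    U_w = \sum_{v \le w} c_{v,w}\, T_v, \qquad c_{w,w} = 1,
\]
where $\le$ denotes the Bruhat order on $S_n$ and $c_{v,w} \in \Z[z,z^{-1}]$. Granting this, the lemma is immediate: substituting into $x = \sum_w b_w U_w$ and collecting the coefficient of each $T_u$ gives $a_u = \sum_{v \ge u} b_v\, c_{u,v}$, and for $u = \delta$ the condition $v \ge \delta$ forces $v = \delta$ (since $\delta$ is the unique largest element of $S_n$), leaving $a_\delta = b_\delta\, c_{\delta,\delta} = b_\delta$.

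It remains to prove the triangularity, which I would do by induction on the length $\ell(w)$. The base case $w = e$ is $U_e = T_e$. For the inductive step, choose a left descent $s = s_i$ of $w$ and set $w' = s w$, so that $w = s w'$ is reduced with $\ell(w') = \ell(w) - 1$ and $s w' > w'$. Then $U_w = \sigma_i^{-1} U_{w'} = (T_s - z T_e) U_{w'} = T_s U_{w'} - z U_{w'}$. By induction $U_{w'} = \sum_{v' \le w'} c_{v',w'} T_{v'}$ with $c_{w',w'} = 1$, and I would expand $T_s U_{w'}$ using the standard Hecke multiplication rule, namely $T_s T_{v'} = T_{sv'}$ when $\ell(sv') > \ell(v')$ and $T_s T_{v'} = T_{sv'} + z T_{v'}$ otherwise. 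Every index produced is $v'$ or $sv'$, and I would invoke the subword (lifting) characterization of Bruhat order to check that $v' \le w$ and $sv' \le w$ whenever $v' \le w'$ and $s w' > w'$: prepending $s$ to a reduced word for $w'$ yields a reduced word for $w = s w'$, so any subword spelling $v'$ extends to a subword spelling $sv'$. This confines the support to $\{u \le w\}$. For the leading coefficient, the only contribution to $T_w$ comes from $v' = w'$ via $T_s T_{w'} = T_{sw'} = T_w$ (the ascending case, coefficient $1$), while all other terms carry indices strictly below $w$; hence $c_{w,w} = 1$.

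The main obstacle is precisely this triangularity claim, and within it the two bookkeeping points: (a) that no index exceeding $w$ in the Bruhat order can appear, and (b) that the leading coefficient is exactly $1$ rather than merely a unit. Both rest on the lifting property of Bruhat order, so I would state that property carefully and apply it to the pair $(v', w')$ with the ascending generator $s$. An alternative route would use the fact recalled above that left multiplication by $\HT = T_\delta$ carries $\{U_w\}$ bijectively onto $\{T_w\}$, reinterpreting the claim as the single matrix entry indexed by the maximal element $\delta$; but I expect the Bruhat-triangularity argument to be the most transparent and self-contained, since it isolates exactly the role played by $\delta$ being the top of the poset.
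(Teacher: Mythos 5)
Your argument is correct. Note first that the paper does not prove this lemma at all: it is quoted verbatim from K\'alm\'an's work (Proposition 3.1 there) and used as a black box, so there is no in-paper proof to compare against. Your proof is a legitimate self-contained substitute. The reduction of the lemma to Bruhat-unitriangularity of the NPB-to-PPB transition matrix is clean, and the use of the maximality of $\delta$ to kill all terms except $v=\delta$ in $a_\delta=\sum_{v\ge\delta}b_v c_{\delta,v}$ is exactly the right way to isolate the role of the longest element. The inductive step also checks out in this paper's conventions: from $\sigma_i-\sigma_i^{-1}=z$ one gets $\sigma_i^{-1}=T_s-zT_e$ and $T_s^2=zT_s+T_e$, so $U_w=T_sU_{w'}-zU_{w'}$ with the multiplication rule you quote, the only contribution to $T_w$ is from $v'=w'$ in the ascending case (coefficient $1$), and the Bruhat bookkeeping via the subword/lifting property confines all indices to $\{u\le w\}$ --- for the case $\ell(sv')<\ell(v')$ one even has the shortcut $sv'<v'\le w'<w$ without invoking lifting. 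One implicit ingredient you should make explicit is the factorization $U_{sw'}=U_sU_{w'}$ when $\ell(sw')=\ell(w')+1$, i.e.\ that the negative permutation braids, like the positive ones, satisfy $U_{uv}=U_uU_v$ whenever lengths add; this is standard but is what licenses the very first equality $U_w=\sigma_i^{-1}U_{w'}$ of the induction. With that stated, the proof is complete.
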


For more detail on the type $A$ Hecke algebra, see \cite{Kalman, KT}.

\section{Proof}\label{proof}

We will prove theorem \ref{mainthm} in this section, by generalizing the original proof in \cite{Kalman}.
A key tool here is the refined MFW bound by Murasugi and Przytycki \cite{MP}.

\begin{lemma}[Corollary of {\cite[theorem 8.3]{MP}}]\label{MPcor}
    Let $D$ be an oriented link diagram. If there is a pair of Seifert circles of $D$ with exactly one 
    crossing between them, which is 
    positive \textup{(}resp.\ negative\textup{)}, then we have $H_{+}(D) = 0$ \textup{(}resp.\ $H_{-}(D) = 0$\textup{)}.
\end{lemma}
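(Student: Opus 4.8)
The plan is to deduce the statement directly from the refined Morton--Franks--Williams bound of Murasugi and Przytycki \cite[theorem 8.3]{MP}, by reading off what the hypothesis says about the \emph{signed Seifert graph} of $D$. Recall that this graph $\Sigma(D)$ has one vertex for each Seifert circle of $D$ and one edge for each crossing, labelled by the sign of that crossing; write $\Sigma_{+}(D)$ for the subgraph carrying only the positive edges. The input from \cite{MP} that I want is that the top of the MFW interval is lowered by a combinatorial index of $\Sigma_{+}(D)$: whenever $\Sigma_{+}(D)$ admits a nontrivial reduction, one obtains the strict inequality $\mathrm{maxdeg}_{v}\,H(D)\le s(D)-2$ rather than merely $\mathrm{maxdeg}_{v}\,H(D)\le s(D)-1$, and symmetrically $\Sigma_{-}(D)$ controls the lower bound.

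First I would record that the hypothesis produces exactly such a reduction. A pair of Seifert circles with a single crossing between them, which is moreover positive, corresponds to an edge $e$ of $\Sigma_{+}(D)$ that is the \emph{only} edge joining its two endpoints; in the terminology of \cite{MP} this non-multiple edge is reducible, and it certifies that the relevant index of $\Sigma_{+}(D)$ is at least $1$. Feeding this into the refined bound gives $\mathrm{maxdeg}_{v}\,H(D)\le s(D)-2$. Since $H_{+}(D)$ is by definition the coefficient of $v^{s(D)-1}$ in $H(D)$, and this exponent now exceeds the top $v$-degree, we conclude $H_{+}(D)=0$, as desired. Only the positive subgraph is used, so the lower bound, and hence $H_{-}(D)$, is untouched; this is the expected asymmetry (for $D=\widehat{\sigma_1}$ one has $H(D)=v^{-1}$, so indeed $H_{+}(D)=0$ while $H_{-}(D)=1$).

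The negative case then follows by symmetry. Passing to the mirror image $D^{*}$ reverses every crossing, satisfies $H(D^{*})(v,z)=H(D)(v^{-1},-z)$, and leaves the Seifert circles, hence $s(D)$, unchanged; this interchanges the roles of $H_{+}$ and $H_{-}$ (up to the harmless substitution $z\mapsto -z$) and turns the single negative crossing into a single positive one. Thus $H_{-}(D)=0$ is equivalent to $H_{+}(D^{*})=0$, which is the case already treated. The one step that needs genuine care --- and which I expect to be the main obstacle --- is the translation of the geometric hypothesis into the precise index condition of \cite{MP}: I must check that a lone positive crossing triggers their reduction \emph{regardless of the rest of the diagram}, in particular when $e$ lies on a cycle of $\Sigma_{+}(D)$ rather than being a bridge, so that the improvement of the bound is genuinely unconditional.
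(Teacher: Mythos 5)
Your argument is the same as the paper's: the paper likewise observes that the hypothesis forces the Murasugi--Przytycki index of the (signed) Seifert graph to be positive and then invokes \cite[theorem 8.3]{MP} to conclude the relevant MFW bound is not sharp, so the extreme coefficient vanishes. Your write-up just spells out the reduction to the positive subgraph and the mirror-image symmetry in more detail than the paper's two-sentence remark.
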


Indeed, with the condition in the above lemma, the index \cite[definition 2.1]{MP} of the Seifert graph of $D$ becomes positive. It then follows from \cite[theorem 8.3]{MP} that the MFW bound cannot be sharp.

\begin{proof}[Proof of Theorem \ref{mainthm}]
    We denote the framed HOMFLY polynomial of the knitted diagram $(D, \{B_i \to \beta_i\}_{i=1}^m)$ by $H(\beta_i)_{i=1}^m$, for a replacing set $\{\beta_i\}$ of braids.
    
    Let $n_i$ be the number of strands in the braid $B_i$.
    We expand each $B_i$ in terms of PPBs in $H_{n_i}$ and write $B_i = \sum_{w} a_{w}^iT_{w}$,
    where $w$ runs over the symmetric group $S_{n_i}$.
    Note that this expansion only requires the skein relation and braid isotopies.
    Hence, by the same computation, the framed HOMFLY polynomial $H(D)$ can be expanded into a linear sum
    \begin{equation}\label{pos}
    H(D) = \sum_{w_1, \dots, w_m} a_{w_1}^1\cdots a_{w_m}^mH(T_{w_i})_{i=1}^m,
    \end{equation}
    where each $w_i$ runs over $S_{n_i}$.
    (We stress that $w_i$ is \emph{not} determined by $i$, rather it indicates the $i$-th set of subscripts.)
    We expand $H(D)$ also in terms of NPBs, and write
    \begin{equation}\label{neg}
        H(D) = \sum_{w_1, \dots, w_m} b_{w_1}^1\cdots b_{w_m}^mH(U_{w_i})_{i=1}^m.
    \end{equation}
    Let us apply negative half-twists $\HT^{-1}$ to (\ref{pos}) and apply positive half-twists $\HT$ to (\ref{neg}).
    By restricting the equations to $H_{\mp}$, we have 
    \begin{align}
        H_{-}(\HT^{-1}D) &= \sum_{w_1, \dots, w_m} a_{w_1}^1\cdots a_{w_m}^mH_{-}(\HT^{-1}T_{w_i}), \label{hminus}\\
        H_{+}(\HT D) &= \sum_{w_1, \dots, w_m} b_{w_1}^1\cdots b_{w_m}^mH_{+}(\HT U_{w_i}). \label{hplus}
    \end{align}
    We recall that the set $\{\HT^{-1}T_{w_i}\}_{w_i}$ is equal to $\{U_{w_i}\}_{w_i}$.
    Unless $w_i = 1$, a reduced diagram of the braid $U_{w_i}$ has a pair of adjacent strands with exactly one negative crossing between them.
    Hence by the definition of knitting patterns and lemma \ref{MPcor}, we have $H_{-}(U_{w_i}) = 0$ unless $w_i = 1$ for all $i$.
    Therefore, in the right hand side of (\ref{hminus}), only one term can be non-zero; it is given by the longest elements $\delta_i \in S_{n_i}$.
    Now we have
    \[
        H_{-}(\HT^{-1}D) = a_{\delta_1}^1\cdots a_{\delta_m}^mH_{-}(\id_i),
    \]
    where $\id_i$ is the trivial braid $\HT^{-1}T_{\delta_i} = U_1$.
    In a similar way, we have
    \[
        H_{+}(\HT D) = b_{\delta_1}^1\cdots b_{\delta_m}^mH_{+}(\id_i).
    \]
    from (\ref{hplus}).
    Since $(D, \{B_i \to \id_i\})$ is a trivial diagram, we have $H(\id_i) = \{(v^{-1}-v)/z\}^{s(D)-1}$.
    With lemma \ref{Kal}, it follows that
    \begin{align*}
        H_{-}(\HT^{-1}D) &= a_{\delta_1}^1\cdots a_{\delta_m}^m z^{1-s(D)}\\
        &= b_{\delta_1}^1\cdots b_{\delta_m}^m  z^{1-s(D)}\\
        &= (-1)^{s(D)-1}H_{+}(\HT D).
    \end{align*}
    The claimed formula is obtained by replacing $D$ with $\HT D$.
\end{proof}

We end with an open question regarding categorification.
Unfortunately, a direct way to compute the Khovanov--Rozansky HOMFLY homology of a link from an arbitrary diagram is not known; rather, we need a braid representation of the link \cite{KR}.
However, in light of the full-twist formulas of \cite{Nakagane,EMAN} it is still reasonable to ask the following.

\begin{ques}
Can theorem \ref{mainthm} be generalized for the HOMFLY homology?
\end{ques}

\subsection*{Acknowledgements}
I would like to thank my supervisor Tam\'as K\'alm\'an, for valuable suggestions and continuous feedback. This work was supported by JSPS KAKENHI Grant Number JP19J12350.

\printbibliography

\end{document}